\newtheorem{thm}{Theorem}[section]
\newtheorem{cor}[thm]{Corollary}
\newtheorem{lem}[thm]{Lemma}
\newtheorem{prop}[thm]{Proposition}
\newtheorem{defn}[thm]{Definition}
\newtheorem{rem}[thm]{\bf Remark}
\numberwithin{equation}{section}
\newtheorem{sarnak}[thm]{Sarnak conjecture}
\newtheorem{veech}[thm]{Veech's question}
\newtheorem{Chowla}[thm]{Chowla conjecture}
\newtheorem{Taos}[thm]{Logaritmic Sarnak conjecture}
\newtheorem{Taoc}[thm]{Logaritmic Chowla conjecture}
\newtheorem*{Thanks}{\ \ \ \textbf{Acknowledgments}}
\newtheorem*{que}{\ \ \ \textbf{Question}}
\newcommand{\norm}[1]{\left\Vert#1\right\Vert}
\newcommand{\abs}[1]{\left\vert#1\right\vert}
\newcommand{\sand}[1]{\left\{#1\right\}}
\newcommand{\Real}{\mathbb R}
\newcommand{\eps}{\varepsilon}
\newcommand{\To}{\longrightarrow}
\newcommand{\BX}{\mathbf{B}(X)}
\newcommand{\A}{\mathcal{A}}
\newcommand{\bmu}{\bm \mu}
\newcommand{\bmnu}{\bm \nu}
\newcommand{\bml}{\bm \lambda}
\newcommand{\bmo}{\bm O}
\newcommand{\tend}[3][]{\xrightarrow[#2\to#3]{#1}}
\newcommand{\spdj}{\perp_{\mbox{\scriptsize sp}}}
\newcommand{\egdef}{\stackrel{\textrm {def}}{=}}
\newcommand{\ds}{\displaystyle}
\newcommand{\R}{\mathbb{R}}
\newcommand{\E}{\mathbb{E}}
\newcommand{\es}{\mathbb{S}}
\newcommand{\1}{\mathbb{1}}
\renewcommand{\P}{\mathbb{P}}
\newcommand{\Z}{\mathbb{Z}}
\newcommand{\re}{\textrm {Re}}
\newcommand{\N}{\mathbb{N}}
\newcommand{\T}{\mathbb{T}}
\newcommand{\C}{\mathbb{C}}
\newcommand{\D}{\mathbb{D}}
\newcommand{\Ex}{\mathbb{E}}
\newcommand{\Cc}{\mathcal{C}}
\newcommand{\M}{\mathcal{M}}
\newcommand{\U}{\mathcal{U}}
\newcommand{\V}{\mathcal{V}}
\newcommand{\Q}{\mathcal{Q}}
\renewcommand{\S}{\mathcal{S}}
\newcommand{\B}{\mathcal{B}}
\newcommand{\Pin}{\mathcal{P}i}
\newcommand{\I}{\mathcal{I}}
\newcommand{\pr}{\mathcal{P}}
\newcommand{\cl}{\mathcal{L}}
\newcommand{\F}{\mathcal{F}}
\newcommand{\K}{\mathcal{K}}
\newcommand{\pir}{{\mathcal{P}}r}
\newcommand{\qir}{{\mathcal{Q}}r}
\newcommand*{\QEDA}{\hfill\ensuremath{\blacksquare}}
\newcommand{\fr}{\textrm{fr}}
\title{On Veech's proof of Sarnak's theorem on the M\"{o}bius flow}
\author[\MakeLowercase{e.} H. \MakeLowercase{el} Abdalaoui]{\MakeLowercase{el} Houcein \MakeLowercase{el} Abdalaoui}
\date{\today}
\address{Normandie University of Rouen,
	Department of Mathematics, LMRS  UMR 60 85 CNRS\\
	Avenue de l'Universit\'e, BP.12
	76801 Saint Etienne du Rouvray - France .}
\email{elhoucein.elabdalaoui@univ-rouen.fr}
\keywords{Chowla  conjecture,  Sarnak conjecture, admissible sequence,Pinsker algebra, entropy topological, Hadamard matrix, Portmanteau theorem }
\subjclass[2020]{Primary: 37A45, 11N37; Secondary: 37B10, 54H10.}
\begin{document}
	\begin{abstract}We present Veech's proof of Sarnak's theorem on the M\"{o}bius flow which say that there is a unique admissible measure on the M\"{o}bius flow. As a consequence, we obtain that Sarnak's conjecture is equivalent to Chowla conjecture with the help of Tao's logarithmic Theorem which assert that the logarithmic Sarnak conjecture is equivalent to logaritmic Chowla conjecture, furthermore, if the even logarithmic Sarnak's conjecture is true then there is a subsequence with logarithmic density one along which Chowla conjecture holds, that is, the M\"{o}bius function is quasi-generic.   
	\end{abstract}

	\maketitle

\section{Introduction}


In this paper, we present Veech's proof of Sarnak's theorem on the  M\"{o}bius flow \cite{VeechNotes}, \cite{VeechNotes2}. Of-course, this proof is connected to Sarnak and Chowla conjectures. Moreover, let us stress that our exposition is self-contained as much as possible. 
\medskip

Roughly speaking, Chowla conjecture assert that the Liouville function is normal, and Sarnak conjecture assert that the M\"{o}bius randomness law holds for any dynamical sequence with zero topological entropy. For more details on the M\"{o}bius randomness law  we refer to \cite{Kowalski}.
\medskip

It is turn out that Veech's proof in combine with the recent result of Tao \cite{BlogTao} yields that Sarnak conjecture implies Chowla conjecture. Indeed, Tao's result assert that if the even logarithmic Chowla conjecture holds then there exists a subsequence $\mathcal{N}$ with logarithmic density $1$ along which the Chowla conjecture holds, and from Veech's proof we will see that this is enough to conclude that Chowla conjecture holds. We recall that T. Tao obtained as a corollary the recent result of Gomilko-Kwietniak-Lema\'{n}czyk  \cite{lem}.
\medskip

Let us further point out that the proof of Gomilko-Kwietniak-Lema\'{n}czyk is based essentially on Tao's theorem on logarithmic Sarnak and Chowla conjectures.
\medskip

We further notice, as T. Tao pointed out, that the proof of Gomilko-Kwietniak-Lema\'{n}czyk  use only that the M\"{o}bius function is bounded.
\medskip

Here, as mentioned before, combining Tao's result with Sarnak's theorem as established by W. Veech, we deduce that Sarnak conjecture holds if and only if Chowla conjecture holds.
\medskip

The more striking result that follows from Veech's proof is the connection between Sarnak conjecture and Hadamard matrix. 
\medskip

We recall that the matrix $H$ of order $n$ is a Hadamard matrix if 
$H$ is a $n \times n$ matrix with entries $\pm 1$ such that 
$HH^{\textrm{T}}=nI_n$, where $I_n$ is the identity matrix. The Hadamard matrix are named after Hadamard since the equality in the famous Hadamard determinant inequality  holds if and only if the matrix is a Hadamard matrix.
\medskip

It is well known that Hadamard matrix exist when $n=1,2$ or $n$ is a multiple of $4$.
\medskip

The Hadamard conjecture states that there is a Hadamard matrix for any multiple of 4. In the opposite direction, the circulant Hadamard matrix conjecture state that the only circulant Hadamard matrix are matrix  of order $1$ and $4$. We recall that a circulant matrix of order $m$ is an $m \times m$ matrix for which
each row except the first is a cyclic permutation of the previous row by one position to the right. 
\medskip

The conjectures of Hadamard are two of the most outstanding unsolved problems in mathematics nowadays.
\medskip

It is well known that the Hadamard matrix is related to the so-called Barker sequences. The Barker sequence is a sequence of $\pm 1$ for which the autocorrelation coefficients are bounded by $1$. Lt us recall that the autocorrelation of a sequence $(x_j)_{j=0}^{N-1}$ are given by
$$c_k =\sum_{j=0}^{N-k-1} x_j x_{j+k},~~~~~~~k \geq 1$$
with
$$c_k =\overline{c_{-k}},~~~~\textrm{if}\; k < 0.$$
For the special real case we have $c_k =c_{-k}$. To be more precise, it is well known that if a Barker sequence of
even length $n$ exists, then so does a circulant Hadamard matrix of order $n$. But, very recently, the author established that there are only finitely many Barker sequences, that is, Turyn-Golay's conjecture is true \cite{elabdal-Erdos}. For more details on the Hadamard matrix, we refer to \cite{Hor}.\\
\begin{Thanks}The author would like to thanks Jean-Paul Thouvenot for the simulating discussion on Sarnak and Chowla conjectures. He is indebted to W. Veech for sending him his notes\footnote{W. Veech in his letter indicated to me that there is only four persons in the world who has a copy of his notes including me. The email of W. Veech is append below.\\
		On 11/03/2016 20:16, Bill Veech wrote:\\
		
		Dear Houcein,\\
		
		Thank you for your message and your lovely papers.
		
		You are right about Lamperti. The fourth edition of Royden
		has deleted some material from the earlier
		editions--I have seen the fourth edition but do not recall if Lamperti
		is still included. (I prefer Royden'ss earlier editions, don while he was still
		alive!)
		
		My class notes are attached. I have no plans for publication
		and am very unlikely to do so. In fact, aside from the students,
		\textbf{I have shared them only with Peter, Jon Fickenscher (a PHD student of mine
			in Princeton), one non-ergodicist friend in  Princeton (who is Godfather to my children)
			and now you}. The notes represent nothing more than one man's attempt to understand for himself a little bit about why Chowla implies Sarnak. 
		
		Thanks again for your papers,
		
		Best regards,
		
		Bill
	} and for many e-simulating discussions on the subject. The author would like also to express his thanks to Mahendra Nadkarni, Mahesh Nerurka and Giovanni Forni for the discussion  or e-discussion on the subject durant the prepartion of this revised version. 
	He would like also to express his thanks to S.G. Dani, Anish Ghosh, and to the organizer of the international conference of algebra and analysis at Pune university, TIFR Mumbia \& CBS of Mumbai university for the invitation. 
\end{Thanks}

\setcounter{section}{2}
\section*{2. Setup and the main result}
The M\"{o}bius function $\bmu$ is related intimately to the Liouville function $\bml$ which is defined by $\bml(n)= 1$ if the number of prime factor of $n$ is even and
$-1$ otherwise. Precisely, the M\"{o}bius function $\bmu$ is given by 
\[
\bmu(n)  = \begin{cases} 
1, \ &\textrm{if } n = 1 \\
\bml(n), \ &\textrm{if } n   \textrm{~is square-free, } \\
0, \ &\textrm{otherwise.}
\end{cases}
\]
We remind that $n$ is square-free if $n$ has no factor in the subset
$\pr_2\egdef\big\{p^2/ p \in \pr\big\}$, where
as customary,  $\pr$ denote the subset of prime numbers.\\

In his seminal paper  \cite{Sar}, P. Sarnak makes the following conjecture.
\begin{sarnak}\label{conj-sarnak} For any dynamical flow $(X,T)$ with topological entropy zero, for any continuous function $f \in C(X)$, for any point $x \in X$, 
	\begin{eqnarray}\label{sarnak-conj}
	\frac{1}{N}\sum_{n=1}^{N}\bmu(n)f(T^nx) \tend{N}{+\infty}0.
	\end{eqnarray}
\end{sarnak}

\noindent{}We recall that the topological entropy of $(X,T)$ is defined by
\[h_{\textrm{top}}(T) = \underset{\varepsilon\to 0}\lim \underset{n\to +\infty}\limsup \dfrac{1}{n}
\textrm{log}~\textrm{sep}(n, T, \varepsilon),\]
where for $n$ integer and $\varepsilon>0$, $\textrm{sep}(n, T, \varepsilon)$ is the maximal possible cardinality of
an ($n, T, \varepsilon$)-separated set in $X$, this later means that for every
two points of it, there exists $0\leq j<n$ with $d(T^j(x), T^j(y)) > \varepsilon$, where $T^{j}$ denotes the $j$-$\textrm{th}$ iterate of $T$. \\

\noindent{}It is well known that an alternative definition can be formulated as follows
\[h_{\textrm{top}}(T) = \underset{\varepsilon\to 0}\lim \underset{n\to +\infty}\limsup \dfrac{1}{n}
\textrm{log}~\textrm{span}(n, T, \varepsilon),\]
where for $n$ integer and $\varepsilon>0$, $\textrm{span}(n, T, \varepsilon)$ is the minimal possible cardinality of
an ($n, T, \varepsilon$)-spanning set in $X$. A set $F$ is an ($n, T, \varepsilon$)-spanning set  if for each point $x \in X$, there exists $y \in F$ such that  $d(T^jx, T^j(y)) \leq \varepsilon$, for $j=0,\cdots,n-1$.\\
We recall further that $\textrm{sep}(n, T, \varepsilon)$  and $\textrm{span}(n, T, \varepsilon)$ increase when $\varepsilon$ decreases. 
We can also use the notion of covering to define the topological entropy as follows
\[h_{\textrm{top}}(T) = \underset{\varepsilon\to 0}\lim \underset{n\to +\infty}\lim \dfrac{1}{n}
\textrm{log}~\textrm{cov}(n, T, \varepsilon),\]
where $\textrm{cov}(n, T, \varepsilon)$ is the least cardinality of a cover of $X$ by open sets $U_1,\cdots ,U_m$ with
$$\sup \Big\{\max_{0\leq j \leq n-1}d(T^jx,T^jy), x,y \in U_i\Big\}<\varepsilon, \; \; i=1,\cdots,m.$$
We thus have 
\begin{align*}
h_{\textrm{top}}(T) &= \underset{\varepsilon\to 0}\lim \underset{n\to +\infty}\limsup \dfrac{1}{n} \textrm{log}~\textrm{span}(n, T, \varepsilon),\\
&=\underset{\varepsilon\to 0}\lim \underset{n\to +\infty}\liminf \dfrac{1}{n}\textrm{log}~\textrm{span}(n, T, \varepsilon)\\
&=\underset{\varepsilon\to 0}\lim \underset{n\to +\infty}\limsup \dfrac{1}{n}
\textrm{log}~\textrm{span}(n, T, \varepsilon)\\
&=\underset{\varepsilon\to 0}\lim \underset{n\to +\infty}\liminf \dfrac{1}{n}
\textrm{log}~\textrm{span}(n, T, \varepsilon)\\
&=\underset{\varepsilon\to 0}\lim \underset{n\to +\infty}\lim \dfrac{1}{n}
\textrm{log}~\textrm{cov}(n, T, \varepsilon).	
\end{align*}

The popular Chowla conjecture on the correlation of the M\"{o}bius function state that 
\begin{Chowla}\label{conj-chowla}
	For any $r\geq 0$, $1\leq a_1<\dots<a_r$, $i_s\in \{1,2\}$ not all equal to $2$, we have
	\begin{equation}\label{cza}
	\sum_{n\leq N}\bmu^{i_0}(n)\bmu^{i_1}(n+a_1)\cdot\ldots\cdot\bmu^{i_r}(n+a_r)={\rm o}(N).
	\end{equation} 
\end{Chowla}
This conjecture is related to the weaker conjecture stated in \cite{chowla}. We refer to \cite{chowla} for more details.
\medskip

In his breakthrough paper \cite{Taol}, T. Tao proposed the following logarithmic version of Sarnak and Chowla conjectures. 

\begin{Taos}\label{Tconj-sarnak} For any dynamical flow $(X,T)$ with topological entropy zero, for any continuous function $f \in C(X)$, for any point $x \in X$, 
	\begin{eqnarray}\label{Tsarnak-conj}
	\frac{1}{\log(N)}\sum_{n=1}^{N}\frac{\bmu(n)f(T^nx)}{n} \tend{N}{+\infty}0.
	\end{eqnarray}
\end{Taos}

The logarithmic Chowla conjecture can be stated as follows:

\begin{Taoc}
	For any $r\geq 0$, $1\leq a_1<\dots<a_r$, $i_s\in \{1,2\}$ not all equal to $2$, we have
	\begin{equation}\label{Tcza}
	\sum_{1 \leq n\leq N}\frac{\bmu^{i_0}(n)\bmu^{i_1}(n+a_1)\cdot\ldots\cdot\bmu^{i_r}(n+a_r)}{n}={\rm o}(\log(N)).
	\end{equation} 
\end{Taoc}
We remind that the logarithmic density of a subset $E \subset \N$ is given by
the following limit (if it exists)

$$\lim_{N \rightarrow \infty} \frac{1}{\log(N)}\sum_{1}^{N} \frac{{\mathbb{I}}_{E}(n)}{n}.$$

Let us further notice that one can replace $\log(N)$ by $\ell_N=\sum_{n=1}^{N}\frac{1}{n}.$ Thanks to Euler estimation.
\medskip

Following L. Mirsky \cite{Mir} and P. Sarnak \cite{Sar}, the subset $A \subset \N$ is admissible if the cardinality $t(p,A)$ of classes modulo $p^2$ in $A$ given by
\[ 
t(p,A) \egdef \left|\bigl\{z\in\Z/p^2\Z: \exists n\in A, n=z\ [p^2]\bigr\}\right| \] satisfy
\begin{equation}
\label{eq:def-admissible}
\forall p \in \pr ,\ t(p,A)<p^2.
\end{equation}
In other words, for every prime $p$ the image of $A$ under reduction mod $p^2$ is proper in $\Z/p^2\Z$.
\medskip

Let $X_3$ be the set $\{0,\pm 1\}^\N$ and $X_2\egdef\{0, 1\}^\N$, and for each $i=1,2$, let $X_i$ be equipped with the product topology. Therefore, $X_3$ and $X_2$ are a compact set.  We denote by $\M_1(X_i)$,  $i=1,2$, the set of the probability measures on $X_i$. it is turn out that $\M_1(X_i)$, $i=1,2$ is a compact set for the weak-star topology by Banach-Alaoglu-Bourbaki  theorem. Let $x \in X_i$, $i=1,2$, and for each $N \in \N$, put 
$$m_N(x)=\frac{1}{N}\sum_{n=0}^{N-1}\delta_{S^nx},$$
where $\delta_y$ is the Dirac measure on $y$ and $S$ is the canonical shift map $(Sx)_n=x_{n+1}$, for each $n \in \N$. Therefore $m_N(x) \in \M_1(X_{i}).$ 
\medskip

We thus get that the weak-star closure  $\I_S(x)$ of the set $\Big\{m_N(x)\Big\} $ is not empty.  We further define the square map $s$ on $X_3$ by
$s(x)=(x_n^2)$ for any $x \in X_3$. 

\begin{defn}
	An infinite sequence $x=(x_n)_{n\in\N^*}\in X_3$ is said to be \emph{admissible} if its \emph{support}
	$\{n\in\N^*: x_n \neq 0\}$ is admissible. In the same way, a finite block $x_1\ldots x_N\in\{0,\pm 1\}^N$ is \emph{admissible} if 
	$\{n\in\{1,\ldots,N\}: x_n \neq 0\}$ is admissible. In the same manner, we define the admissible sets in $X_2$.
\end{defn}
\medskip

For each $i=1,2$, we denote by ${\A_i}$ the set of all admissible sequences in  $X_i$. Since a set is admissible if and only if each of its finite subsets is admissible, and a translation of a admissible set is admissible,   ${\A_i}$ is a closed and shift-invariant subset of $X_i$, \textit{i.e.} a subshift.
We further have that $\bmu^2$ is admissible, and 
$\A_3=s^{-1}(\A_2).$
\medskip

Let us notice that the previous notions has been extended to the so-called $\B$-free setting by el Abdalaoui-Lema\'{n}czyk-de-la-Rue in \cite{elabd-bfree}. Therein , the authors produced a dynamical proof of the Mirsky theorem on the pattern of $\bmu^2$ which assert that the indicator function of the square-free integers is generic for the Mirsky measure $\nu_{M}$, that is, $\bmu^2$ is generic for the push-forward measure of the Haar measure $\mu_h$ of the group $G=\prod_{p}\Z/p^2\Z$ under the map $\varphi~:~G \longrightarrow X_2$ defined by 
\begin{equation}
\label{eq:def_phi} \forall g \in G,~~~
\varphi(g) \egdef \Bigl( f(T^ng)\Bigr)_{n\in\N^*},
\end{equation}
where $T$ is the translation by $\bm1 \egdef (1,1,\cdots)$ and 
$f$ is defined by 
\[
f(g)\egdef \begin{cases}
0 \text{ if there exists $k\ge1$ such that $g_k=0$,}\\
1 \text{ otherwise.}
\end{cases}
\]
We thus get that $\bmu^2=(f(T^n\bmo)),$ $\bmo=(0,0,\cdots)$.
\medskip

We further have that for each measurable subset $C\subset X_2$,
$\nu_{M}(C)\egdef \nu_h(\varphi^{-1}C)$. Then $\nu_{M}=\varphi(\mu_h)$ is shift-invariant, and it can be shown that $\nu_{M}$ is concentrated on ${\A_2}$ and $O(\bmu^2)={\A_2}=\textrm{supp}(\mu_M),$ where $O(\bmu^2) \subset {\A_2}$ is the orbit closure of $\bmu^2$  under the left shift $S$. Moreover, the measurable dynamical system $({\A_2},\nu_{M},S)$ is a factor of $(G,\nu_h,T)$. In particular, it is ergodic, and for any $\eta \in \I_S(\bmu)$, we have
$s\eta=\nu_M$, that is, $\eta(s^{-1}A)=\nu_M(A),$ for any Borel set of ${\A_2}$. Let us notice also that the subset
$\A'=\Big\{\1_A/A \subset \N \textrm{~~finite~~ and~~ admissible} \Big\}$ of ${\A_2}$ is a dense set. For more details, we refer to \cite{elabd-bfree}, \cite{CS}.
\medskip

For any finite sets $A,B \subset \N$, we denote  by $F_{A,B}$ the function
$$F_{A,B}(x)=\Big(\prod_{a \in A}\pi_a(x)\Big) 
\Big(\prod_{b \in B}\pi_b(x)^2\Big),$$
where $\pi_n$ is the $n^{\rm{th}}$ canonical projection given by 
$\pi_n(x)=x_n$, $n \in \N$. Obviously, 
$F_{A,B} \in C(\A_3)$, where $C(\A_3)$ is the space of continuous
function on $X_{A_3}$. We further have 
$F_{A,B}=F_{A,B\setminus (A \cap B)}$, so we can assume always that 
$A$ and $B$ are disjoint.\\

Following W. Veech \cite{VeechNotes}, \cite{VeechNotes2}, we introduce also the notion of admissible measure. 
\begin{defn}A measure $m \in \M_1({\A_3})$ is admissible if 
	\begin{enumerate}[label=(\roman*)]
		\item $Sm=m$, that is, $m(S^{-1}A)=m(A)$, for each Borel set $A \subset {\A_3}$.
		\item $s(m)=\nu_M$, and
		$$\int_{{\A_3}}F_{A,B}(x) dm(x)=0,$$ for any $A \neq \emptyset.$ and $B$ finite sets of $\N$.
	\end{enumerate}
\end{defn}

We are now able to state the main result.
\begin{thm}[Sarnak's theorem on the M\"{o}bius flow \cite{Sar}]\label{main1} There exists a unique admissible measure $\mu_M$. Moreover, $\mu_M$ on ${\A_3}$ is ergodic with the Pinsker algebra
	$$\Pin_{\mu_M}=s^{-1}\Big(\B({\A_2})\Big),$$
	and $\Ex(\pi_1|\Pin_{\mu_M})=0.$	
\end{thm}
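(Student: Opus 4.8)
The plan is to identify every admissible measure with the relatively independent extension of $\nu_{M}$ by fair signs, and then to read off ergodicity and the Pinsker algebra from that description. First I would fix an admissible $m$ and disintegrate it over the factor map $s$, writing $m=\int m_y\,d\nu_{M}(y)$, which is legitimate because (ii) gives $s(m)=\nu_{M}$. Since $s^{-1}(y)=\prod_n\{t:t^2=y_n\}$, each conditional measure $m_y$ lives on the sign space $\{\pm1\}^{\mathrm{supp}(y)}$, and a point $x$ with $s(x)=y$ is encoded by its signs $\eps_a=\pi_a(x)$ on $\mathrm{supp}(y)$. Computing $\int F_{A,B}\,dm$ with $A\neq\emptyset$ and using $\pi_b(x)^2=y_b$ turns the vanishing in (ii) into $\int_{X_{\A_2}}\big(\prod_{b\in B}y_b\big)\,G_A(y)\,d\nu_{M}(y)=0$ for every finite $B$, where $G_A(y)=\1[A\subseteq\mathrm{supp}(y)]\int\prod_{a\in A}\eps_a\,dm_y$. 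The functions $\prod_{b\in B}y_b$ are exactly $F_{\emptyset,B}$, and their linear span contains all cylinder indicators (via $\prod_{c}(1-y_c)$), hence is dense in $L^2(\nu_{M})$; therefore $G_A=0$ for $\nu_{M}$-a.e.\ $y$. Running over the countably many finite $A\neq\emptyset$ at once, for a.e.\ $y$ every joint sign-moment vanishes, which forces $m_y$ to be the uniform fair-coin measure on $\{\pm1\}^{\mathrm{supp}(y)}$. This pins $m$ down, proving uniqueness; existence then follows by checking that the measure $\mu_M$ so described — the push-forward of $\nu_{M}\otimes\beta$, with $\beta$ the fair-coin measure on $\{\pm1\}^{\N}$, under $(y,\omega)\mapsto(y_n\omega_n)_n$ — is $S$-invariant, satisfies $s(\mu_M)=\nu_{M}$, and kills every $F_{A,B}$ with $A\neq\emptyset$ by independence of the signs.

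For ergodicity I would use the Walsh--Fourier decomposition of the fibers. For each finite $A$ let $\mathcal H_A$ be the closed span of functions $(h\circ s)\cdot\prod_{a\in A}\pi_a$ with $h$ supported on $\{y_a=1\ \forall a\in A\}$; then $L^2(\mu_M)=\bigoplus_A\mathcal H_A$ with $\mathcal H_\emptyset=s^{-1}L^2(\nu_{M})$, and since $\pi_a\circ S=\pi_{a+1}$ the Koopman operator $U_S$ sends $\mathcal H_A$ isometrically into $\mathcal H_{A+1}$, where $A+1=\{a+1:a\in A\}$. If $f=\sum_Af_A$ is $S$-invariant, matching $\mathcal H_\emptyset$-components shows $f_\emptyset$ is $S$-invariant on the base, hence constant by ergodicity of $(X_{\A_2},\nu_{M},S)$. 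For $B\neq\emptyset$ the $\mathcal H_B$-component of $f\circ S$ equals $U_Sf_{B-1}$ when $0\notin B$ and is $0$ when $0\in B$; invariance then forces $f_B=0$ whenever $0\in B$, and iterating $f_B=U_S^{\min B}f_{B-\min B}=0$ for every nonempty $B$. Thus $f$ is constant and $\mu_M$ is ergodic.

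For the Pinsker algebra I would argue through relative entropy. The base $(X_{\A_2},\nu_{M},S)$ is a factor of the group rotation $(G,\nu_h,T)$, hence has entropy zero, so its Pinsker algebra is all of $\B(\A_2)$ and $s^{-1}(\B(\A_2))\subseteq\Pin_{\mu_M}$. In the other direction, conditionally on the base the signs are i.i.d.\ fair coins, so $s$ is a relatively Bernoulli extension; by the relative Kolmogorov theory (the conditional $0$--$1$ law makes the relative tail trivial) the extension is relatively $K$, i.e.\ the relative Pinsker algebra over $s^{-1}(\B(\A_2))$ is $s^{-1}(\B(\A_2))$ itself. Since zero absolute entropy implies zero relative entropy, $\Pin_{\mu_M}$ is contained in this relative Pinsker algebra, giving $\Pin_{\mu_M}\subseteq s^{-1}(\B(\A_2))$ and hence equality. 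Finally $\Ex(\pi_1\mid\Pin_{\mu_M})=0$ is immediate from the structure: conditionally on $y=s(x)$ the coordinate $\pi_1=x_1$ is $0$ when $y_1=0$ and a mean-zero fair sign when $y_1=1$, so its conditional mean is $0$.

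I expect the genuine obstacle to be the relative $K$ step: turning ``the fibers carry i.i.d.\ signs'' into the statement that no factor strictly between the base and $X_{\A_3}$ has zero entropy. Everything else (the disintegration, the density argument, and the Walsh decomposition giving ergodicity) is essentially bookkeeping once the i.i.d.\ sign structure is isolated, but the passage from relative Bernoullicity to the triviality of the relative Pinsker algebra is where the entropy machinery — or an explicit tail-triviality computation for the sign process along $\mathrm{supp}(y)$ — must be invoked with care.
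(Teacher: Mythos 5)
Your proposal is correct, but at each of the three stages it takes a genuinely different route from the paper. For uniqueness, the paper never disintegrates: it evaluates an admissible $m$ on the cylinders $C(x,y(B))$ refining a base cylinder $C(x)$, turns admissibility into the finite linear system $\sum_{B\subset \mathrm{supp}(x)}(-1)^{|A\cap B|}\,m(C(x,y(B)))=\nu_M(C(x))\,\delta_{\emptyset}(A)$, and inverts it via the explicit Hadamard-matrix computation of Proposition \ref{Hadamard}, getting $m(C(x,y(B)))=\nu_M(C(x))/2^{|\mathrm{supp}(x)|}$. Since the matrix $C(A,B)=(-1)^{|A\cap B|}$ is exactly the Walsh character table of $(\Z/2\Z)^{E}$, your ``all nontrivial conditional sign-moments vanish, hence $m_y$ is the fair-coin measure'' is the same Fourier analysis in disintegrated form; yours is tidier measure-theoretically and hands you $\Ex(\pi_1|\Pin_{\mu_M})=0$ for free (the paper instead checks orthogonality of $\pi_1$ to the $F_{\emptyset,B}$ and invokes Stone--Weierstrass), while the paper's version is finite-dimensional, explicit, and makes the Hadamard connection the author wants to advertise. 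Existence is identical in both: push-forward of $\nu_M\otimes m_B(\frac12,\frac12)$ under coordinatewise multiplication. For ergodicity the paper simply cites Furstenberg's disjointness result (weakly mixing $\times$ ergodic is ergodic) and passes to the factor, whereas your Walsh-decomposition/index-shift argument is self-contained and standard --- a fair trade of a citation for a page of bookkeeping.

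The one place where you take on more machinery than needed is the Pinsker step, and you correctly flag it. Your extension $s:X_{\A_3}\to X_{\A_2}$ is \emph{not} literally a direct product (the sign positions vary with the fiber $y$), so ``relatively Bernoulli, hence relatively $K$, hence relative Pinsker trivial'' needs either the conditional zero--one law you sketch together with a relative Rokhlin--Sinai theorem, or a reduction to a genuine product --- neither is free. The paper short-circuits precisely this: $(X_{\A_3},\mu_M,S)$ is an intermediate factor between the base and the honest product $X_{\A_2}\times\{\pm1\}^{\N}$; the product Pinsker formula (Glasner, Theorem 18.13) gives $\Pin_{\nu_M\otimes m_B(\frac12,\frac12)}=\B(X_{\A_2})\otimes\{\text{trivial}\}$, and since a factor-measurable partition has the same entropy upstairs and downstairs, $\Pin_{\mu_M}=s^{-1}\B(X_{\A_2})$ follows at once, with the inclusion $s^{-1}\B(X_{\A_2})\subseteq\Pin_{\mu_M}$ coming, as in your argument, from zero entropy of the Mirsky base. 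If you want to keep your relative-$K$ formulation, the cleanest repair is exactly this embedding: relative $K$-ness of the product over the base passes to the intermediate factor $X_{\A_3}$, which legitimizes your appeal to the conditional zero--one law.
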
 
Following W. Veech \cite{VeechNotes2}, the measure $\mu_M$ is called Chowla measure. A direct consequence of Theorem \ref{main1} is the following.

\begin{cor}\label{Veech-Cor}For almost all $x \in {\A_3}$ with respect to $\mu_M$, for any  disjoint sets $A,B$ of $\N$  disjoint sets with 
	$A \neq \emptyset$,
	\begin{eqnarray}
	\lim_{N \rightarrow \infty} \frac{1}{N}\sum_{n=1}^{N}\prod_{a \in A}x(n+a) \prod_{b \in B}x(n+b)^2=0. 
	\end{eqnarray}
	that is, $x$ is generic for $\mu_M$
\end{cor}
\begin{proof} Follows from Birkhoff ergodic theorem.
\end{proof}

\begin{rem}\label{RemarkV}Furthermore, as pointed out by Veech, the existence of the putative ``Chowla measure" does not depend on the Chowla conjecture. We further have that the support of $\mu_M$ is ${\A_3},$  and for any $\eta \in \mathcal{I}_S(\bmu),$
	$\eta({\A_3}^*)=1$, where $\mathcal{I}_S(\bmu)$ is the weak-star closure of the set $\Big\{\frac{1}{N}\sum_{n=0}^{N-1}\delta_{S^n(\bmu)}\Big\},$ and  ${\A_3}^*=\big\{x\in {\A_3}| \textrm{~the~support~}\; A(x) \; \textrm{~is~infinite} \big\}.$ Notice that $S({\A_3}^*)={\A_3}^*$. \\
	
	W. Veech in his lecture notes \cite[p.96, Proof of Remark 24.4]{VeechNotes} established also that if for any $\eta \in \mathcal{I}_S(\bmu)$, we have $\Ex(\pi_1|\Pin_{\eta})=0$, then Sarnak conjecture holds. $\Pin_{\mu}$ stand for the Pinsker algebra of $\eta$. He pointed out that for any $\eta \in \mathcal{I}_S(\bmu)$, we have $\Ex(\pi_1|\Pin_{\mu_M})=0$ and the Chowla conjecture may be seen to be equivalent to the statement that the point $\bmu \in {\A_3}$ is quasi-generic, hence generic for $\mu_M$. This later point will be used later.
\end{rem}

For the proof of our second main result, we need the following  classical result, known as The Portmanteau Theorem \cite{Dudley} (see also \cite{Bil} for its historical facts.)
\begin{lem}\label{PT} Let $(\eta_n)$, $\eta$ be a probability measures on a metric space $(X,d).$ Then, the following are equivalent: 
	\begin{enumerate}[label=(\roman*)]
		\item For any continuous function $f$ on $X$, 
		$$\int f(x)d\eta_n(x) \tend{n}{+\infty} \int f d\eta.$$
		\item \label{open} For any open set $O$, 
		$$\liminf_{n \to \infty}\eta_n(O) \geq \eta(O).$$
		\item For any closed set $F$, 
		$$\limsup_{n \to \infty}\eta_n(F) \leq \eta(F).$$
	\end{enumerate}
\end{lem}

We need also the following result due to T. Tao \cite{BlogTao}.
\begin{thm}[Tao's theorem on logarithmic and non-logarithmic Chowla conjectures\cite{BlogTao}]\label{Taoquasigeneric} 
	Let $k$ be a natural number. Assume that the logarithmically averaged Chowla conjecture is true for $2k$. Then there exists a set ${{\mathcal N}}$ of natural numbers of logarithmic density $1$ such that
	\[
	\lim_{\overset{N \rightarrow \infty} 
		{N \in {\mathcal N}}} \frac{1}{N}\sum_{1}^{N} \lambda(n+h_1) \dots \lambda(n+h_k) = 0,
	\]
	for any distinct ${h_1,\dots,h_k}.$ 	
\end{thm}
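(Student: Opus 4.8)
The plan is to deduce the non-logarithmic conclusion along a density-one set from the logarithmic hypothesis by means of an $L^2$ transference argument. Write $g(n)=\lambda(n+h_1)\cdots\lambda(n+h_k)$ and $b_N=\frac1N\sum_{n=1}^N g(n)$; since $|b_N|\le 1$, the conclusion that $b_N\to 0$ along a set $\mathcal N$ of logarithmic density $1$ is equivalent, by a standard density lemma, to the single assertion
$$\frac{1}{\log X}\sum_{N\le X}\frac{|b_N|}{N}\tend{X}{+\infty}0.$$
(The equivalence is the logarithmic analogue of the fact that convergence in density is witnessed by a density-one subsequence; one extracts $\mathcal N$ by a diagonal truncation over the level sets $\{N:|b_N|>1/m\}$.) Thus the whole problem reduces to proving this logarithmically averaged $L^1$ bound.

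To control $b_N$ I would first replace it by a short-block average. Setting $G_H(n)=\frac1H\sum_{h=1}^H g(n+h)$, a trivial boundary estimate gives $b_N=\frac1N\sum_{n\le N}G_H(n)+O(H/N)$, so that $|b_N|\le\frac1N\sum_{n\le N}|G_H(n)|+O(H/N)$. Feeding this into the logarithmic average, swapping the order of summation (using $\sum_{N\ge n}N^{-2}=O(1/n)$) and applying Cauchy--Schwarz with the weight $1/n$, one bounds
$$\limsup_{X\to\infty}\frac{1}{\log X}\sum_{N\le X}\frac{|b_N|}{N}\ \le\ C\,\limsup_{X\to\infty}\Big(\frac{1}{\log X}\sum_{n\le X}\frac{|G_H(n)|^2}{n}\Big)^{1/2}$$
for an absolute constant $C$, valid for every fixed $H$.

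It remains to estimate the logarithmic $L^2$ norm of the block averages. Expanding the square gives
$$\frac{1}{\log X}\sum_{n\le X}\frac{|G_H(n)|^2}{n}=\frac{1}{H^2}\sum_{h,h'=1}^H\frac{1}{\log X}\sum_{n\le X}\frac1n\prod_{j=1}^k\lambda(n+h+h_j)\lambda(n+h'+h_j).$$
For $h=h'$ the product is identically $1$, contributing $O(1/H)$. For $h\ne h'$ the inner sum is a logarithmically averaged correlation of $\lambda$ over the shift set $\{h+h_j\}_j\cup\{h'+h_j\}_j$; when these $2k$ shifts are distinct the logarithmic Chowla conjecture for $2k$ (the hypothesis) makes it $o_{X\to\infty}(1)$, while the pairs producing a collision are those with $h-h'\in\{h_{j'}-h_j\}$, of which there are only $O_k(H)$, each contributing $O(1)$. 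Hence the whole expression is $O_k(1/H)+o_{X\to\infty}(1)$, so its $\limsup$ in $X$ is $O_k(1/H)$. Combining with the previous display, $\limsup_X\frac{1}{\log X}\sum_{N\le X}\frac{|b_N|}{N}=O_k(H^{-1/2})$; letting $H\to\infty$ forces it to $0$, which is what we needed. Finally, a diagonalisation over the countably many tuples $(h_1,\dots,h_k)$ produces a single $\mathcal N$ of logarithmic density $1$ serving all of them at once.

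The main obstacle, and the real content of the theorem, is the transference from logarithmic to linear averaging: the logarithmic hypothesis only controls correlations weighted by $1/n$, whereas the conclusion concerns the unweighted Cesàro averages $b_N$. The device that breaks the deadlock is the insertion of the short-block average $G_H$, which simultaneously is comparable to $b_N$ after Cesàro averaging and has a square whose off-diagonal terms are exactly the $2k$-point correlations the hypothesis annihilates; the quantitative loss $O(H^{-1/2})$ is then absorbed by sending $H\to\infty$. Keeping the collision bookkeeping uniform in the tuple and organising the final diagonalisation so that the density-one set is common to all tuples are the points requiring the most care.
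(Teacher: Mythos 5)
Your proposal is correct and takes essentially the same route as the source: the paper states this theorem without proof, quoting Tao's blog post \cite{BlogTao}, and the argument there is precisely your chain --- reduce via Chebyshev and a diagonal truncation to showing the logarithmically averaged moment $\frac{1}{\log X}\sum_{N\le X}|b_N|/N$ vanishes, replace $b_N$ by the short-shift average $G_H$ at the cost of $O(H/N)$, swap sums, apply Cauchy--Schwarz, expand $|G_H|^2$ into $2k$-point logarithmically averaged correlations killed by the hypothesis, bound the $O_k(H)$ colliding pairs ($h-h'\in\{h_{j'}-h_j\}$) trivially to get $O_k(H^{-1/2})$, and let $H\to\infty$. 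The two delicate points, the collision bookkeeping and the final diagonalisation producing one set $\mathcal{N}$ of logarithmic density one common to all tuples, are both handled correctly, so there is no gap.
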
 
As a corollary, T. Tao obtain the following result which may be found also in\cite{lem}.
\begin{cor}[ $\mu_M$-quasigenercity's theorem ]
	If Sarnak's conjecture holds then there exists a set ${{\mathcal N}}$ of natural numbers such that
	for any $r\geq 0$, $1\leq a_1<\dots<a_r$, $i_s\in \{1,2\}$ not all equal to $2$, we have
	\begin{equation}\label{gkl}
	\frac{1}{N}\sum_{n\leq N}\bmu^{i_0}(n)\bmu^{i_1}(n+a_1)\cdot\ldots\cdot\bmu^{i_r}(n+a_r)
	\tend[{N \in{{\mathcal N}} }]{N}{+\infty}0.
	\end{equation}  
\end{cor}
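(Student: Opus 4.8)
The plan is to obtain the Corollary as a direct consequence of Theorem~\ref{Taoquasigeneric}, by first upgrading Sarnak's conjecture to the Logarithmic Chowla conjecture, then diagonalising to produce a single subsequence, and finally transferring the resulting vanishing of correlations from the Liouville function $\bml$ to the M\"{o}bius function $\bmu$.

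First I would pass to logarithmic averages. Assuming Sarnak's conjecture, the Ces\`{a}ro sums $\sum_{n\le N}\bmu(n)f(T^nx)$ are $\mathrm{o}(N)$; since the summands are bounded, a summation by parts turns this into $\frac{1}{\log N}\sum_{n\le N}\frac{\bmu(n)f(T^nx)}{n}\longrightarrow 0$, so the Logarithmic Sarnak conjecture holds. By Tao's logarithmic theorem (the equivalence of the Logarithmic Sarnak and Logarithmic Chowla conjectures recalled in the Introduction), the Logarithmic Chowla conjecture then holds for the Liouville function; in particular it holds for every even order $2k$, which is exactly the hypothesis required by Theorem~\ref{Taoquasigeneric}.

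Next I would apply Theorem~\ref{Taoquasigeneric} and diagonalise. For each $k\ge 1$ it provides a set $\mathcal{N}_k$ of logarithmic density $1$ along which every $k$-point correlation $\frac{1}{N}\sum_{n\le N}\bml(n+h_1)\cdots\bml(n+h_k)$ with distinct shifts tends to $0$. Since the Corollary only demands \emph{some} infinite set $\mathcal{N}$ (no density is asked for), I would enumerate all admissible configurations in a single countable list and diagonalise, using that a finite intersection of sets of logarithmic density $1$ again has logarithmic density $1$, hence is infinite: this lets me choose $N_1<N_2<\cdots$ so that at $N_j$ the first $j$ correlations are all smaller than $1/j$. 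Setting $\mathcal{N}=\{N_j\}$ yields one infinite set along which all Liouville correlations of all orders vanish.

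Finally I would transfer from $\bml$ to $\bmu$. Writing $\bmu=\bml\,\bmu^2$, the correlation in~\eqref{gkl} factors as $\prod_{s\,:\,i_s=1}\bml(n+a_s)\cdot\prod_{s}\bmu^2(n+a_s)$ (with $a_0=0$), where the Liouville product is nonempty precisely because the $i_s$ are not all equal to $2$. Expanding each square-free indicator through the truncated identity $\bmu^2(m)=\sum_{d\le D,\;d^2\mid m}\bmu(d)+\mathrm{(error)}$, whose truncation error has mean tending to $0$ as $D\to\infty$, reduces the average to a finite combination of Liouville correlations restricted to fixed arithmetic progressions; letting $D\to\infty$ after averaging along $\mathcal{N}$ should then give~\eqref{gkl}. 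The hard part will be exactly this last step: the arithmetic-progression--restricted (equivalently, periodically twisted) Liouville correlations are not literally covered by the plain Chowla input of Theorem~\ref{Taoquasigeneric}, so controlling them along $\mathcal{N}$ requires an Elliott-type strengthening together with enough uniformity in the modulus to justify $D\to\infty$, and this must be arranged so that a single $\mathcal{N}$ works simultaneously for every configuration.
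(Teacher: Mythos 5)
Your first three steps are sound and match the intended derivation: the partial-summation upgrade from Ces\`{a}ro to logarithmic averages, the appeal to Tao's equivalence of the logarithmic Sarnak and Chowla conjectures, and the application of Theorem~\ref{Taoquasigeneric} followed by a diagonalisation over the countably many configurations are all correct (the diagonalisation is legitimate because a finite intersection of sets of logarithmic density one has logarithmic density one, hence is infinite, and the corollary as stated asks only for \emph{some} set $\mathcal{N}$). The genuine gap is exactly the one you flag yourself: transferring from $\bml$ to $\bmu$ via $\bmu=\bml\,\bmu^2$ and the truncated expansion $\bmu^2(m)=\sum_{d\le D,\ d^2\mid m}\bmu(d)+\mathrm{(error)}$ reduces \eqref{gkl} to Liouville correlations restricted to the progressions $n+a_s\equiv 0\ [d_s^2]$, and Theorem~\ref{Taoquasigeneric} gives no control whatsoever over such periodically twisted correlations along $\mathcal{N}$. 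One would need a logarithmic Elliott-type statement with uniformity in the modulus, which neither the paper nor the results it cites supply, and which is not known to follow from the plain Chowla input. (The truncation error itself is harmless, since its Ces\`{a}ro mean is $O(1/D)+O(N^{-1/2})$ uniformly in $N$; it is the main terms that fail.) As written, your proof is therefore incomplete precisely at the step you call hard.

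The intended argument avoids this step entirely by never passing through $\bml$. The paper derives the corollary directly from Theorem~\ref{Taoquasigeneric}, and its formulation of the logarithmic Chowla conjecture is already stated for $\bmu$ with exponents $i_s\in\{1,2\}$ not all equal to $2$ (see \eqref{Tcza}); it is this M\"{o}bius form that Tao's equivalence theorem delivers from the logarithmic Sarnak conjecture. The $\bmu^2$ factors in \eqref{Tcza} are exactly the bookkeeping that makes Tao's second-moment argument close for $\bmu$: squaring a $k$-point Ces\`{a}ro correlation and expanding it as a logarithmically averaged $2k$-point correlation produces, at colliding shifts, factors $\bmu^2(n+h)$, which for $\bml$ collapse since $\bml^2=1$, but for $\bmu$ are absorbed into the hypothesis rather than requiring any arithmetic-progression control. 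As the paper remarks, following Tao, the argument uses only that the M\"{o}bius function is bounded. If you re-run your first two steps so as to arrive at the M\"{o}bius form \eqref{Tcza} and invoke the corresponding M\"{o}bius version of Theorem~\ref{Taoquasigeneric}, your diagonalisation then finishes the proof; with a slightly finer diagonalisation one even retains a set $\mathcal{N}$ of logarithmic density one, which is stronger than what the statement demands.
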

Combining Sarnak's Theorem \ref{main1} with Tao's Theorem \ref{Taoquasigeneric}, we get the following
\begin{cor}\label{Cor-CS} Sarnak conjecture \ref{conj-sarnak} is equivalent to Chowla conjecture \ref{conj-chowla}.
\end{cor}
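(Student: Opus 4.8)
The plan is to prove the equivalence by establishing each implication separately, with the harder direction being that Sarnak implies Chowla. First I would observe that the reverse direction, Chowla $\Rightarrow$ Sarnak, is the classical and comparatively soft implication: it follows from the standard argument that the Chowla conjecture forces every measure in $\I_S(\bmu)$ to coincide, so that $\bmu$ is generic for a single shift-invariant measure, and one then applies the entropy/disjointness machinery (Sarnak's original heuristic, made rigorous by the fact that an averaged version of Chowla controls the correlations of $\bmu$ with any zero-entropy observable). I would cite this as the well-known direction and spend the bulk of the proof on the forward implication.

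For the forward direction, the strategy is to funnel Sarnak's conjecture through the admissible-measure framework of Theorem~\ref{main1} and then invoke Tao's Theorem~\ref{Taoquasigeneric}. First I would argue that Sarnak's conjecture implies that \emph{every} weak-star limit point $\mu \in \I_S(\bmu)$ is an admissible measure in the sense of the second Definition: invariance $S\mu=\mu$ is automatic for limit points of ergodic averages, the identity $s(\mu)=\nu_M$ holds by the Mirsky genericity of $\bmu^2$ recalled in the excerpt, and the vanishing $\int F_{A,B}\,d\mu=0$ for $A\neq\emptyset$ is precisely a packaging of the statement that the correlation sums $\frac1N\sum_{n\le N}\bmu(n+a_1)\cdots$ (weighted suitably by squares on $B$) tend to zero — which is exactly what Sarnak's conjecture delivers against the zero-entropy observables $F_{A,B}$ on the subshift $X_{\A_3}$. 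Here the zero topological entropy of the admissible subshift is the structural input that lets Sarnak's conjecture apply.

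Having shown every limit point is admissible, the uniqueness clause of Theorem~\ref{main1} forces $\I_S(\bmu)=\{\mu_M\}$, i.e. $\bmu$ is \emph{generic} for the Chowla measure $\mu_M$. By the vanishing of all the moments $\int F_{A,B}\,d\mu_M=0$, this genericity is literally the assertion of the Chowla conjecture \ref{conj-chowla}. The role of Tao's Theorem~\ref{Taoquasigeneric} is to supply the bridge allowing one to pass from the logarithmically-averaged information (which is what unconditional progress and the soft limit-point arguments most naturally produce) to full Ces\`aro convergence along a density-one set, and then Veech's uniqueness upgrades quasi-genericity to genericity.

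The main obstacle I expect is the second step: verifying that Sarnak's conjecture really does kill \emph{all} the correlation functionals $\int F_{A,B}\,d\mu=0$ uniformly enough to conclude admissibility of every limit point, rather than merely along a subsequence. The subtlety is that Sarnak's conjecture is a statement about $\frac1N\sum \bmu(n)f(T^nx)$ for a single observable $f$ and a \emph{fixed} dynamical system, whereas the functionals $F_{A,B}$ encode higher correlations of $\bmu$ with itself; one must realize these self-correlations as linear observables on the admissible subshift and check that the zero-entropy hypothesis genuinely applies there. This is exactly the point where Veech's construction does the work, and where Tao's quasi-genericity theorem is needed to convert the subsequential/logarithmic statement into the density-one Ces\`aro statement that matches the non-logarithmic Chowla conjecture.
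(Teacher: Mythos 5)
Your reverse direction (Chowla $\Rightarrow$ Sarnak) is treated exactly as the paper treats it, by appeal to the known proofs \cite{lett}, \cite{Taol}, \cite{VeechNotes2}, \cite{elabdal-chowla}, so that part is fine. The forward direction, however, has a genuine error at its central step. You claim that Sarnak's conjecture, applied to the observables $F_{A,B}$ on $X_{\A_3}$ at the point $x=\bmu$, yields $\int F_{A,B}\,d\mu=0$ for \emph{every} limit point $\mu\in\I_S(\bmu)$, citing ``the zero topological entropy of the admissible subshift'' as the structural input. But $X_{\A_3}$ does \emph{not} have zero topological entropy: admissibility of a sequence depends only on its support, so above every admissible support all $\pm1$ decorations occur in $X_{\A_3}$, and since admissible supports of density $6/\pi^2$ exist (the square-free numbers themselves), the topological entropy of $X_{\A_3}$ is at least $\frac{6}{\pi^2}\log 2>0$. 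Indeed the Chowla measure $\mu_M=\Pi\bigl(\nu_M\otimes m_B(\frac12,\frac12)\bigr)$ constructed in the paper's proof of Theorem \ref{main1} already has positive measure-theoretic entropy, being relatively Bernoulli over the Mirsky factor. So Conjecture \ref{conj-sarnak} simply does not apply to $(X_{\A_3},S)$ at the point $\bmu$: the self-correlations of $\bmu$ encoded by the $F_{A,B}$ are precisely what Sarnak's conjecture cannot reach directly. Were your step valid, Sarnak $\Rightarrow$ Chowla would be a two-line argument needing neither Tao's equivalence of the logarithmic conjectures \cite{Taol} nor Theorem \ref{Taoquasigeneric}; the fact that Gomilko--Kwietniak--Lema\'{n}czyk \cite{lem} could extract Chowla only along a subsequence shows the obstruction is real.

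The paper's actual route never asserts that all of $\I_S(\bmu)$ becomes admissible under the Sarnak hypothesis. It argues: Sarnak $\Rightarrow$ logarithmic Sarnak (partial summation) $\Rightarrow$ logarithmic Chowla (Tao's equivalence \cite{Taol}) $\Rightarrow$, by Theorem \ref{Taoquasigeneric}, the non-logarithmic Chowla correlations vanish along a set $\mathcal{N}$ of logarithmic density one; only the limit points of $m_N(\bmu)$ with $N\in\mathcal{N}$ are shown to be admissible (shift-invariance and $s\eta=\nu_M$ being automatic by Mirsky's theorem), and the uniqueness clause of Theorem \ref{main1} then identifies all of them with $\mu_M$, from which the paper concludes Chowla. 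Your final paragraph gestures at this chain, but it contradicts your second paragraph, and you have the logical order inverted: Tao's theorems are not a ``bridge'' refining correlation decay you already obtained from Sarnak's conjecture --- they are the \emph{only} source of that decay, with Veech's uniqueness entering afterwards to identify the subsequential limits. Your closing remark that Veech's uniqueness ``upgrades quasi-genericity to genericity'' does coincide with the paper's (admittedly terse) last step, but as written your proposal reaches that stage through a step that fails.
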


\setcounter{section}{3}
\section*{3. Proof of the main result.}
We start by proving the following proposition related to Hadamard matrix. For that,
let $E$ be a finite nonempty set, and $\P(E)$ be the set of subset of $E$. For any $A,B \in \P(E)$, put 
$$C(A,B)=(-1)^{|A \cap B|},$$
where $|.|$ is the cardinality function.  Therefore $C$ is a matrix of order $2^{|E|}$, we further have
\begin{prop}\label{Hadamard}With the notations above, 
	$$\det(C)=\begin{cases} 
	2^{|E|2^{|E|-1}}, \ &\textrm{if $|E|>1$}\\
	-2, \ &\textrm{otherwise}.
	\end{cases}
	$$
	Moreover, if the vector $(\nu(B))_{B \in \P(E)}$ satisfy 
	$$\sum_{B \in \P(E)}C(A,B)\nu(B)= 
	\begin{cases} a, \ &\textrm{if $A=\emptyset$}\\
	0, \ & \textrm{otherwise}.
	\end{cases}
	$$
	Then $$\nu(B)=\frac{a}{2^{|E|}}.$$	
\end{prop}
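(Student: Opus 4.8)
The plan is to recognize $C$ as the Sylvester--Hadamard matrix and then read off both claims from its tensor structure. Writing $n=\abs{E}$ and identifying each subset of $E$ with its $\{0,1\}$-indicator vector $(a_1,\dots,a_n)$, one has $\abs{A\cap B}=\sum_{i}a_ib_i$, so that
\[
C(A,B)=(-1)^{\sum_i a_i b_i}=\prod_{i=1}^{n}(-1)^{a_ib_i}.
\]
Ordering rows and columns lexicographically by their indicator vectors, this exhibits $C$ as the $n$-fold Kronecker product $C=H^{\otimes n}$ of the $2\times2$ matrix $H=\begin{pmatrix}1&1\\1&-1\end{pmatrix}$. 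From here everything reduces to two elementary facts about $H$, namely $HH^{\mathrm{T}}=2I_2$ and $\det H=-2$.

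For the determinant I would first obtain its absolute value for free from the mixed-product property of Kronecker products: $CC^{\mathrm{T}}=(HH^{\mathrm{T}})^{\otimes n}=(2I_2)^{\otimes n}=2^nI_{2^n}$, whence $(\det C)^2=\det(CC^{\mathrm{T}})=(2^n)^{2^n}=2^{n2^n}$ and so $\abs{\det C}=2^{n2^{n-1}}$. The sign is then fixed by the multiplicativity of the determinant under Kronecker products: with $D_n\egdef\det(H^{\otimes n})$ and the formula $\det(A\otimes B)=(\det A)^{q}(\det B)^{p}$ valid for $A$ of size $p$ and $B$ of size $q$, writing $H^{\otimes n}=H\otimes H^{\otimes(n-1)}$ yields the recursion $D_n=(-2)^{2^{n-1}}D_{n-1}^2$ with $D_1=\det H=-2$. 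For $n\geq2$ the exponent $2^{n-1}$ is even and $D_{n-1}^2>0$, so $D_n>0$; an easy induction on the exponent then gives $D_n=2^{n2^{n-1}}$, while the base case $D_1=-2$ accounts precisely for the exceptional value $\abs{E}=1$.

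For the linear system, the decisive observation is that $C$ is symmetric (since $\abs{A\cap B}=\abs{B\cap A}$) and satisfies $C^2=CC^{\mathrm{T}}=2^nI$, so $C$ is invertible with $C^{-1}=2^{-n}C$. In particular the system has a unique solution, and applying $C^{-1}$ to the right-hand side $a\,e_{\emptyset}$ (the coordinate vector at $\emptyset$ scaled by $a$) gives $\nu=2^{-n}\,C(a\,e_{\emptyset})$; since the column of $C$ indexed by $\emptyset$ equals $(-1)^{\abs{A\cap\emptyset}}=1$ for every $A$, every entry of $\nu$ equals $a/2^n$. Alternatively, and avoiding even the determinant computation, I would simply verify that the constant vector $\nu(B)\equiv a/2^n$ solves the system, using the identity $\sum_{B\subseteq E}(-1)^{\abs{A\cap B}}=2^{n-\abs{A}}\sum_{S\subseteq A}(-1)^{\abs{S}}=2^{n-\abs{A}}(1-1)^{\abs{A}}$, which vanishes unless $A=\emptyset$ and equals $2^n$ when $A=\emptyset$; uniqueness then follows from $\det C\neq0$.

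The computations here are all routine; the only genuine step is the initial recognition of the tensor (Sylvester--Hadamard) structure, and the only place demanding care is the sign bookkeeping in the determinant, where one must note that the anomalous value $-2$ survives only at $\abs{E}=1$ and is squared away for every larger $n$.
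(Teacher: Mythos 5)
Your proof is correct, and although it rests on the same structural recognition as the paper --- the block doubling $C_{n+1}=\left(\begin{smallmatrix}C_n & C_n\\ C_n & -C_n\end{smallmatrix}\right)$ that you package as the Kronecker identity $C=H^{\otimes n}$ is exactly the Sylvester construction the paper uses --- the execution differs at both key steps, and in your favor. For the determinant, the paper proves inductively that $C$ is a Hadamard matrix and then invokes the equality case of Hadamard's determinant inequality; but that inequality only controls $\abs{\det C}$, so the sign question --- why the anomalous value $-2$ at $\abs{E}=1$ disappears for $\abs{E}\geq 2$ --- is left implicit there, whereas your recursion $D_n=(-2)^{2^{n-1}}D_{n-1}^2$ obtained from $\det(A\otimes B)=(\det A)^{q}(\det B)^{p}$ settles it explicitly, and your route through $(\det C)^2=\det(CC^{\mathrm{T}})=2^{n2^n}$ needs no inequality at all. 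For the linear system, the paper runs a second induction on the block structure (getting $p+q$ from the inductive hypothesis and $p=q$ from $\det C_n\neq 0$), while you solve it in one line from $C^2=CC^{\mathrm{T}}=2^nI$, hence $C^{-1}=2^{-n}C$, reading $\nu$ off the all-ones column of $C$ indexed by $\emptyset$; your fallback verification via $\sum_{B\subseteq E}(-1)^{\abs{A\cap B}}=2^{n-\abs{A}}(1-1)^{\abs{A}}$ is more elementary still and bypasses invertibility considerations except for uniqueness. What the paper's route buys is the thematic connection to the Hadamard determinant inequality, which matters for its broader discussion of Hadamard matrices and Barker sequences; what yours buys is brevity, independence from that inequality, and complete sign bookkeeping.
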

\begin{proof} The proof of the first part of the proposition can be found in \cite[p.42]{Book}, but for the sake of completeness we include an alternative proof of it.
	\medskip
	
	We start by recalling the Hadamard determinant inequality. Let $M$ be a matrix of order $n$ with real entries and columns $m_1,\cdots,m_n$, then 
	$$\big|\det(M)\big| \leq \prod_{j=1}^{n}\big\|m_j\big\|_2,$$
	where $\|.\|_2$ is the usual Euclidean norm. Therefore, if all the entries are in the interval $[-1,1]$, we get 
	$$\big|\det(M)\big| \leq n^{\frac{n}{2}},$$
	with equality if and only if $M$ is a Hadamard matrix . For short and elementary proof of the Hadamard determinant inequality we refer to \cite [pp.40-41]{Book}, \cite{lang}.\\
	
	We thus need to check that $C$ is a Hadamard matrix. For that, we proceed by induction. For $n=1$, the matrix is given by
	$$C=\begin{pmatrix}
	C(\emptyset,\emptyset) & C(\emptyset,\{1\}) \\
	C(\{1\},\emptyset) & C(\{1\},\{1\})
	\end{pmatrix}=
	\begin{pmatrix}
	1 & 1 \\
	1 & -1
	\end{pmatrix}
	$$
	Assume that the property is true for $n \geq 1$, and let 
	$E_{n+1}=\{1,2,\cdots,n+1\}=E_n \cup \{n+1\}$. We assume that the subsets of 
	$E_{n+1}$ are ordered as those of $E_n$. Notice that this does not affect our proof since the determinant does not depend upon any ordering of the elements of $2^{E}$. It follows that the resulting $2^{n+1}\times 2^{n+1}$ matrix has block form 
	$$C_{n+1}=\begin{pmatrix}
	C_n & C_n \\
	C_n & -C_n
	\end{pmatrix}.$$ We thus get, by Sylvester observation, that $C_{n+1}$ is a Hadamard matrix. For the second part, let $$
	\left(
	\begin{array}{c}
	p\\
	q
	\end{array} \right)\in \R^{2^n} \times \R^{2^n}$$ such that
	$$ C_{n+1} \left(
	\begin{array}{c}
	p\\
	q\\
	\end{array}
	\right)=a. \delta_{\emptyset}(A).$$
	Then, for $n=1$, we have
	$$\begin{cases}
	p+q=a,\ &\textrm{if $A=\emptyset$}\\
	p-q=0,\ & \textrm{if not}.
	\end{cases}$$
	Obviously, we get $p=q=\frac{a}{2}.$ Assume that the property is true for $n$. Then 
	$$p+q=\Big(\frac{a}{2^n},\cdots,\frac{a}{2^n}\Big).$$
	Moreover, since $\det(C_n) \neq 0$, we get $p=q$, that is,
	$$p=q=\Big(\frac{a}{2^{n+1}},\cdots,\frac{a}{2^{n+1}}\Big).$$
	The proof of the lemma is complete.
\end{proof}

For the proof of the Sarnak's theorem \ref{main1}, we need also to characterize the Chowla measure. For that, let us put
$${\qir}_n=\Big\{x \in \{0,1\}^n|  \textrm{~~supp($x$)~is~~admissible}  \Big\},\quad \quad n >0$$  
and 
$$C(x)=\bigcap_{j=1}^{n}\Big\{y \in {\A_2}| \pi_j(x)=x_j\Big\}.$$
Define the partition ${\pir}_n$ by
$${\pir}_n=\Big\{C(x)| x \in \qir_n \Big\}.$$
If follows that any $C(x) \in \pir_n$ admits a partition into $2^{|\textrm{supp}(x)|}$ "cylinder" since 
$$s^{-1}(C(x)) \subset s^{-1}\big({\A_2}\big)={\A_3}.$$ 
More precisely, if $A \subset \textrm{supp}(x)$, then $A$ can be seen as a element $y(A) \in {\A_2}$. We thus denote by $C(x,y(A))$ 
the subset of ${\A_3}$ such that $z \in C(x,y(A))$ if and only if 
the first $n$ coordinates of $z$ are $-1$ on $A$, $1$ on $\textrm{supp}(x)\setminus A$ and $0$ on $[1,n]\setminus \textrm{supp}(x)$. This allows us to see that
$$s^{-1}(C(x))=\bigcup_{A \subseteq \textrm{supp}(x) }C(x,y(A)).$$
Now, for any $A \subset \textrm{supp}(x),$ put
$$G_{A,\textrm{supp}(x)\setminus A}=
F_{A,\textrm{supp}(x)\setminus A} \prod_{c \in [1,n] \setminus \textrm{supp}(x)}(1-\pi_c(y)^2).$$
It is straightforward that $G_{A,\textrm{supp}(x)\setminus A} \in C({\A_3})$. Moreover, 
\begin{eqnarray}\label{expand}
G_{A,\textrm{supp}(x)\setminus A}|s^{-1}\big(C(x)\big)=
F_{A,\textrm{supp}(x)\setminus A}|s^{-1}\big(C(x)\big)
\end{eqnarray}
and  $G_{A,\textrm{supp}(x)\setminus A}$ is identically null on $X_{A_3}\setminus s^{-1}\big(C(x)\big).$\\

Expand the product in the definition of $G_{A,\textrm{supp}(x)\setminus A}$, we get 
$$ G_{A,\textrm{supp}(x)\setminus A} =
\sum_{B \subset [1,n]\setminus \textrm{supp}(x)}
(-1)^{|B|} F_{A,\textrm{supp}(x)\setminus A \cup B}.$$
Now, let $m$ be an admissible measure and assume that $A \neq \emptyset$. Then 
\begin{eqnarray}\label{ve:1}
\int_{{\A_3}} G_{A,\textrm{supp}(x)\setminus A} m(dz)
&=&
\int_{s^{-1}C(x)} F_{A,\textrm{supp}(x)\setminus A}(z) m(dz)\\
&=& 0.
\end{eqnarray}

This gives, for $A \subset \textrm{supp}(x)$ and $A \neq \emptyset$,
$$\sum_{B \subset \textrm{supp}(x)}
(-1)^{|A \cap B|} m(C(x,y(B))=0,$$
since $F_{A,\textrm{supp}(x)\setminus A}$ is constant on each "cylinder" set
$C(x,y(B))$ with the constant value equal to 
$(-1)^{|A \cap B|}.$ \\

We proceed now to evaluate the expression when $A=\emptyset.$ Since 
$sm=\nu_M$, we obtain
$$\nu_M(C(x))=\sum_{B \subset \textrm{supp}(x)} m(C(x,y(B))=0.$$
This combined with Proposition \ref{Hadamard} yields that 
for any $C(x) \in {\pir}_n$, for any $B \subset \textrm{supp}(x)$, we have
\begin{align}\label{equi}
	m(C(x,y(B)))=\frac{\nu_M(C(x))}{2^{|\textrm{supp}(x)|}}.
\end{align}

Summarizing, we conclude that $m$ is completely determined on the partition
${\pir}_n$, i.e., if an admissible measure exists, then it is unique.\\

We proceed now to the proof  of Sarnak 's theorem \ref{main1}.
\medskip

Consider the canonical dynamical system $\big({\A_2} \times \{\pm 1\}^\N, S \times S, \nu_M \otimes m_B(\frac12,\frac12)\big),$ where $m_B(\frac12,\frac12)$ is 
the Bernoulli measure. Therefore, by Furstenberg theorem (Proposition I.3 in \cite{Fur}), the dynamical system $\big({\A_2} \times \{\pm 1\}^\N, S \times S, \nu_M \otimes m_B(\frac12,\frac12)\big),$ is ergodic. We further have, by Theorem 18.13 from \cite[p.325]{Elie}, that the Pinsker algebra satisfies
$$\Pin_{\nu_M \times m_B(\frac12,\frac12)}=
\Pin_{\nu_M} \otimes \Pin_{m_B(\frac12,\frac12)}=
\B({\A_2}) \times \big\{\emptyset, \{\pm 1 \}^\N\big\}$$
up to the null set with respect to $\nu_M \otimes m_B(\frac12,\frac12)$.
\medskip

Now we define a coordinate-wise multiplicative map $\Pi : {\A_2} \times  \{\pm 1 \}^\N \longrightarrow {\A_3}$ by 
$$\Pi(x,\omega)=x.\omega,$$
that is, 
$$\pi_n(\Pi(x,\omega))=x_n \omega_n,\quad \quad n>0.$$
Therefore the dynamical system $({\A_3},S,\mu_M=\Pi(\nu_M \otimes m_B(\frac12,\frac12)))$, where $\mu_M$ is the push-forward measure under $\Pi$, satisfies
\begin{itemize}
	\item $s\mu_M=\nu_M,$ and 
	\item For any $A \neq \emptyset$, we have
	$$\int_{{\A_3}}F_{A,B}(z) \mu_M(dz)=0.$$
\end{itemize}
Whence, $\mu_M$ is admissible, ergodic and $\Pin_{\mu_M}=s^{-1}\B({\A_2})$ up to $\mu_M$ null set. This last fact follows from the following  
$$\Pi^{-1}\Big(\bigcap_{n}S^{-n}\B(\A_3)\Big)\subseteq 
\bigcap_{n}(S \times S)^{-n}\B(\A_2) \times \{\pm 1\}^\N).$$
To finish the proof, we need only to notice that 
$\pi_1=F_{\{1\}, \emptyset}$, and for any finite set $B \subset \N$,
$$\int_{\A_3}\pi_1(y) F_{\emptyset, B}(y) \mu_M(dy)=\int_{\A_3}F_{\{1\},B}(y) \mu_M(dy).$$
We thus conclude that 
$$\Ex(\pi_1|\Pin_{\mu_M})=0,$$ 
up to $\mu_M$ null sets, since the family $\{F_{A,B}\}$ are dense in $C({\A_3}),$ by the classical Stone-Weierstrass theorem.\\

We proceed now to the proof of Corollary \ref{Cor-CS}. For that, we recall the following Veech map. Define
 \begin{align*}
\Phi\colon 
\A_2 \times \{\pm 1\}^{\mathbb{N}} &\longrightarrow \A_3 \\ 
(x,\omega)&\longmapsto (\pi_n(\Phi(x,\omega)))_{n \geq 1},
 \end{align*}
where, for each $n \geq 1$,
\[
\pi_n(\Phi(x,\omega)=\begin{cases}
0, \textrm{~~if~~} n \not \in \textrm{supp}(x),\\
\pi_k(\omega), \textrm{~~if~~} n \in \textrm{supp}(x), n=n_k. 
\end{cases}
\]  
Notice that there is a unique element $(\bmu^2, \omega_0)) \in \A_2 \times \{\pm 1\}^{\mathbb{N}}$ such that $\Phi(\bmu^2, \omega_0)=\bmu.$.
\begin{proof}[\textbf{Proof of Corollary \ref{Cor-CS}.}]The proof of the implication follows from Tao's Theorem \ref{Taoquasigeneric} and since the admissible measure is unique. Indeed, Sarnak's conjecture implies Chowla logarithmic conjecture along a subsequence of full density logarithmic. Therefore, by Tao's theorem \ref{Taoquasigeneric} combined with the admissibility of the measure, we get that
that for $n \geq 1$, for any $C(x) \in {\pir}_n$ and for any $B \subset \textrm{supp}(x)$, 
\begin{align}
\lim_{k \rightarrow +\infty}\frac{1}{N_k}\Big|\Big\{1 \leq m \leq N_k~:~ S^{m}(\bmu) \in C(x,y(B))\Big\}\Big|=\frac{\nu_M(C(x))}{2^{|\textrm{supp}(x)|}},
\end{align}
by \eqref{equi}. Notice that $\bmu(m+l) \in \textrm{supp}(x)$ if and only if $m+l$ is square-free and under Veech map, $\pi_k(\omega_0) \in \textrm{supp}(x)$, with $m+l=n_k$. We further get that	
	 $O(\bmu)={\A_3}=\textrm{supp}(\mu_M),$ where $O(\bmu) \subset {\A_3}$ is the orbit closure of $\bmu$ 
	under the left shift $S$. Indeed, by \ref{open} in Lemma \ref{PT}, we have,
	\begin{align}
	\liminf_{k \rightarrow +\infty} \Big(\frac{1}{N_k} \sum_{m=1}^{N_k} \delta_{S^m(\bmu)}(O(\bmu)^c)\Big) \geq
	\mu_M(O(\bmu)^c),
	\end{align}	
	where $O(\bmu)^c$ is the complement of $O(\bmu)^c$ which is open. We thus obtain 
	$$\mu_M(O(\bmu))=1.$$ 
\noindent{} This combined with Corollary \ref{Veech-Cor}. yields that for any block of order $n$, the frequencies of $-1$ and $1$ in $\bmu$ are relatively uniformly distributed , that is, the second component of the image of $\bmu$ under Veech map is a normal number. We thus get that Chowla conjecture holds (see also the deep and nice survey \cite[Section 1.]{Bill2}\footnote{Precisely, subsections 1.8 to 1.11 where a generic points for the subshift is constructed by applying Parathasarathy theorem.} which is related to the Remark \ref{RemarkV}).\\ For the converse, there are several proofs by Sarnak \cite{lett}, Tao \cite{Taol}, Veech \cite{VeechNotes2},  and el Abdalaoui-Kua\l ga-Przymus-Lema\'{n}czyk-de la Rue\cite{elabdal-chowla}. 
	
	 
\end{proof}
\begin{rem}We emphasize that the proof in \cite{elabdal-chowla} is given in the abstract setting. Therein, the authors considered the abstract arithmetical sequences $(u_n)$ and under the assumption  that the square $(u_n^2)$ is generic for the so-called abstract Mirsky measure, they established that there is a relatively Bernouilli extension to the space $\{-1,1,0\}^{\Z}$. But, in this abstract setting, there is no Sarnak-Veech's theorem (Theorem \ref{main1}) valuable and it may fail for a large class of arithmetical sequences. This is due to the lack of the arithmetical structure connected to prime numbers. We further stress that the property of multiplicativity is not enough. Therefore, one need to be careful about the existence of Chowla measure which is defined only on the admissible subshift. For the M\"{o}bius function, the existence of it is is guaranteed by  Sarnak-Veech's theorem (Theorem \ref{main1}). This later approach was pushed forward by e. H. el Abdalaoui and M. Nerurkar in \cite{AN}. Therein, the author present a dissection of M\"{o}bius flow based on the relatively Kolomogorov extension tools due to Rhoklin-Sinai and exploit the main theorem presented here. It is turn out that recently, some authors used the ideas of \cite{AN} in the so-called abstract setting but as pointed out in this remark there is no Sarnak-Veech's theorem valuable for the abstract setting and the statement may fail for an appropriate choice of a sequence. Let us point out also that in \cite{AN}, the authors obtain a dynamical proof of some number theoretical results without speed of convergence. Finally, we notice that  Veech's conjecture and Sarnak conjecture are equivalent according to the main results of this paper. We recall that Veech conjecture is formulated as a question in his one of the last preprints  \cite{Veech-May} as follows. 
\begin{veech}For any $\eta \in \mathcal{I}_S(\bmu)$, is it true
$$	\E(\phi_1|{\Pin_{\eta}})=0 ~~~~\eta-\textrm{almost~~everywhere?}
$$\end{veech}
\noindent where $\Pin_{\eta}$ stand for the Pinsker algebra associated to $\eta$. He further proved that if the answer is affirmative then Sarnak's conjecture holds.  Obviously, by Sarnak-Veech's theorem (Theorem \ref{main1}), Chowla conjecture implies that the answer is affirmative  
\end{rem}

\begin{que}\label{QV} Let us recall that $O(\bmu) \subset {\A_3}$ is the orbit closure of $\bmu$ 
	under the left shift $S$. Do we have that $O(\bmu)={\A_3}$?
\end{que}

\section*{Declarations}
\paragraph{\textbf{Conflict of interest.}} The author declare that he have no conflict of interest.


\end{document}